\documentclass[11pt, reqno]{amsart}
\usepackage{textcmds} 
\usepackage[shortlabels]{enumitem}
\usepackage{amsmath, amssymb, amsfonts, amstext, verbatim, amsthm, mathrsfs}
\usepackage{microtype}
\usepackage[all]{xy}
\usepackage[modulo]{lineno}
\usepackage{aliascnt}
\usepackage{enumitem}
\usepackage{xspace}
\usepackage{amsfonts}
\usepackage{amssymb}
\usepackage[centertags]{amsmath}
\usepackage{amsthm}
\usepackage{rotating}
\usepackage[margin=3.25cm]{geometry}
\usepackage{dsfont}
\usepackage{bm}
\usepackage{subfigure}
\usepackage{amsmath}
\usepackage{array}
\usepackage[all]{xy}
\usepackage{euscript}
\usepackage[T1]{fontenc}
\usepackage{mathbbol}
\usepackage{nccmath}
\usepackage{marginnote}
\usepackage{stmaryrd}
\usepackage{youngtab}
\usepackage{tikz}
\usepackage{blkarray}
\usepackage{chngcntr}
 \usepackage[abs]{overpic}

\usepackage{tikz}

\usepackage{graphics,graphicx}  

\let\oldtocsection=\tocsection
\let\oldtocsubsection=\tocsubsection

\renewcommand{\tocsection}[2]{\hspace{0em}\oldtocsection{#1}{#2}}
\renewcommand{\tocsubsection}[2]{\hspace{1em}\oldtocsubsection{#1}{#2}}

\usepackage[colorlinks=true,linkcolor=black,citecolor=blue,urlcolor=blue,citebordercolor={0 0 1},urlbordercolor={0 0 1},linkbordercolor={0 0 1}]{hyperref} 

\tikzset{node distance=3cm, auto}

\makeatletter
\def\@secnumfont{\bfseries}
\def\section{\@startsection{section}{1}%
  \z@{.7\linespacing\@plus\linespacing}{.5\linespacing}%
  {\normalfont\Large\bfseries}}
\def\subsection{\@startsection{subsection}{2}%
  \z@{.75\linespacing\@plus.7\linespacing}{-.5em}%
  {\normalfont\large\bfseries}}

  \def\subsubsection{\@startsection{subsubsection}{3}%
  \z@{.75\linespacing\@plus.7\linespacing}{-.5em}%
  {\normalfont\bfseries}}
\makeatother

\newtheorem{thm}{Theorem}[subsection]
\newtheorem{lemma}[thm]{Lemma}
\newtheorem{prop}[thm]{Proposition}

\newtheorem{question}[thm]{Question}

\newtheorem{remark}[thm]{Remark}

\theoremstyle{remark}
\numberwithin{equation}{subsection} 
 \numberwithin{figure}{section}

\newtheoremstyle{customremark}
{3pt}
{3pt}
{}
{}
{\bfseries}
{.}
{.5em}
{}
\theoremstyle{customremark}
\newtheorem{rmk_no_diamond}[thm]{Remark}

\newtheorem{example_no_diamond}[thm]{Example}

\newcommand{\er}{{\Diamond}}

\makeatletter
\newcommand{\dashover}[2][\mathop]{#1{\mathpalette\df@over{{\dashfill}{#2}}}}
\newcommand{\fillover}[2][\mathop]{#1{\mathpalette\df@over{{\solidfill}{#2}}}}
\newcommand{\df@over}[2]{\df@@over#1#2}
\newcommand\df@@over[3]{%
  \vbox{
    \offinterlineskip
    \ialign{##\cr
      #2{#1}\cr
      \noalign{\kern1pt}
      $\m@th#1#3$\cr
    }
  }%
}
\newcommand{\dashfill}[1]{%
  \kern-.5pt
  \xleaders\hbox{\kern.5pt\vrule height.4pt width \dash@width{#1}\kern.5pt}\hfill
  \kern-.5pt
}
\newcommand{\dash@width}[1]{%
  \ifx#1\displaystyle
    2pt
  \else
    \ifx#1\textstyle
      1.5pt
    \else
      \ifx#1\scriptstyle
        1.25pt
      \else
        \ifx#1\scriptscriptstyle
          1pt
        \fi
      \fi
    \fi
  \fi
}
\newcommand{\solidfill}[1]{\leaders\hrule\hfill}

\newcommand{\oset}[3][0ex]{%
  \mathrel{\mathop{#3}\limits^{
    \vbox to#1{\kern-2\ex@
    \hbox{$\scriptstyle#2$}\vss}}}}

\makeatother

\date{\today}

\title{The symplectic Hadamard question}

\author{Dan Cristofaro-Gardiner}
\address{Mathematics Department, University of Maryland, College Park}
\email{dcristof@umd.edu}
\thanks{DCG thanks the NSF for their support under agreements DMS-2227372 and DMS-2238091}
\subjclass{53D05}

\begin{document}

\maketitle

\begin{abstract}

We show that a simply connected symplectic manifold admitting a complete nonpositively curved compatible metric is symplectomorphic to $\mathbb{R}^{2n}$ with its standard symplectic form.
This answers the "symplectic Hadamard question" of McDuff-Salamon in the affirmative.

\end{abstract}

\section{Introduction}

Any symplectic manifold has a plethora of compatible Riemannian metrics, but the relationship between the Riemannian geometry of such a metric and the underlying symplectic geometry is still quite mysterious.  For example, the following beautiful question has attracted interest.  Recall that {\em Hadamard's theorem} states that a simply connected complete nonpositively curved manifold is diffeomorphic to $\mathbb{R}^n$.  It is then natural to ask:

\begin{question}\cite[Prob. 51]{McDuffSalamon2017} (``Symplectic Hadamard Question")
If a simply connected symplectic manifold admits a complete nonpositively curved compatible metric, must the manifold be symplectomorphic to $\mathbb{R}^{2n}$ with its standard symplectic form?
\end{question}

The aim of this note is to resolve this question.  More precisely, we prove the following:

\begin{thm}
\label{thm:main}
Let $(M,\omega,J,g)$ be a simply connected symplectic manifold with compatible almost complex structure inducing a metric $g$.   Assume that $g$ is complete and all sectional curvatures of $g$ are nonpositive.  Then $(M,\omega)$ is symplectomorphic to $(\mathbb{R}^{2n},\omega_{std})$.
\end{thm}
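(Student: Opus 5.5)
Fix a point $p\in M$ and let $r=d(p,\cdot)$ be the distance function. By the Cartan--Hadamard theorem, $\exp_p\colon T_pM\to M$ is a diffeomorphism, so $\rho=r^2/2$ is a smooth exhaustion function on $M$. By Hessian comparison under nonpositive curvature,
\[
\nabla^2\rho \geq g .
\]
The plan is to use $\rho$ (or a modification of it) as a Liouville-type potential and show that $(M,\omega)$ is a complete, finite-type Liouville manifold with no topology. Such a manifold is symplectomorphic to $\mathbb{R}^{2n}$, provided the associated Liouville flow is complete and its skeleton is a single point.

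\textbf{Step 1: a Liouville vector field.} The naive candidate is $X=\nabla\rho/2$, i.e.\ half the radial vector field $r\partial_r$. The conditions we would like are $\mathcal{L}_X\omega=\omega$, or at least a convexity property with respect to $\omega$. In general $\mathcal{L}_X\omega\neq\omega$, since $J$ is not parallel. I therefore plan a different route. Since $M$ is diffeomorphic to $\mathbb{R}^{2n}$, we have $H^2(M)=0$, so $\omega=d\lambda$ for some primitive $\lambda$; the Liouville vector field $Z$ is defined by $\iota_Z\omega=\lambda$. The task is to choose $\lambda$ so that $Z$ is complete, $\rho$ increases along $Z$, and $Z$ vanishes only at $p$ and is nondegenerate there.

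The natural choice is the radial homotopy primitive. Set $\phi_t=\exp_p(t\exp_p^{-1}(\cdot))$, which contracts $M$ to $p$ along geodesics, and put
\[
\lambda=\int_0^1 \phi_t^*\bigl(\iota_{\partial_t\phi_t}\omega\bigr)\,dt .
\]
Then $d\lambda=\omega$, and $\lambda$ vanishes on the radial direction. Pulled back to $T_pM\cong\mathbb{R}^{2n}$ via $\exp_p$, this is just the standard Poincaré-lemma primitive of the pulled-back form $\exp_p^*\omega$.

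\textbf{Step 2: radial convexity.} This is the main obstacle. It is cleaner to work entirely on $V=T_pM$ with the form $\sigma=\exp_p^*\omega$ and the radial vector field $E$. I want to show that $\sigma$ restricted to each sphere $S_R$ gives a contact-type hypersurface for the primitive $\lambda$, i.e.\ $\lambda\wedge(d\lambda)^{n-1}>0$ on $S_R$, uniformly enough to run a Moser/Gray argument comparing with $\omega_{std}$ on $V$.

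The key input is Jacobi-field control. For a unit-speed geodesic $\gamma$ from $p$ and a Jacobi field $Y$ with $Y(0)=0$, nonpositive curvature makes $|Y|$ convex with $|Y(t)|\geq t|Y'(0)|$, and $\langle Y,Y'\rangle\geq |Y|^2/t$. Compatibility gives $\omega(\gamma',JY)=g(\gamma',Y)$, and the Gauss lemma tells us how $\omega$ pairs radial and tangential directions. I would try to compute $\lambda(Y)$ at time $R$ as $\int_0^R \omega(\dot\gamma,\,\text{Jacobi field})$ evaluated through the geodesic variation, and then bound it below using $\langle Y,Y'\rangle\geq |Y|^2/t$.

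I expect this step to fail pointwise in general, since $J$ can twist relative to parallel transport. The fallback is to avoid the contact condition altogether and instead show that the Liouville flow of $Z$, with $\iota_Z\sigma=\lambda$, is complete and transverse to large spheres in an averaged sense. Alternatively, one can use a Gromov-style argument: $(M,J)$ has $J$-convex exhaustion behaviour coming from $\nabla^2\rho\geq g$. Indeed, $dd^c\rho(v,Jv)$ differs from $\nabla^2\rho(v,v)+\nabla^2\rho(Jv,Jv)$ by torsion terms involving $\nabla J$, and these would need to be controlled.

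\textbf{Step 3: conclusion.} Given a complete Liouville structure on $M$ whose vector field has a unique nondegenerate zero, the backward flow contracts everything to $p$. Flowing out from a small Darboux ball around $p$ then identifies $(M,\omega)$ with the completion of a star-shaped domain in $\mathbb{R}^{2n}$, which is symplectomorphic to $(\mathbb{R}^{2n},\omega_{std})$. Completeness of $g$ is what guarantees completeness of the flow, via the bound $|Z|\lesssim r$ obtained from the Jacobi estimates.
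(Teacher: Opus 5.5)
\textbf{There is a genuine gap at Step 2, as you suspect.} Nothing in the proposal shows that the Liouville field $Z$ of the radial homotopy primitive $\lambda$ (with $\iota_Z\omega=\lambda$) is gradient-like for $\rho$, or that it vanishes only at $p$. Your Jacobi estimates cannot supply this.

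With $\omega(u,v)=g(Ju,v)$ one computes $d\rho(Z)=\lambda(J\nabla\rho)$. At a point at distance $R$ from $p$ this equals $R\int_0^1 g\bigl(J\dot\gamma(tR),Q(t)\bigr)\,dt$, where $Q$ is the Jacobi field with $Q(0)=0$ and $Q(1)=RJ\dot\gamma(R)$. Since $J$ is not parallel, $J\dot\gamma$ can rotate relative to $Q$, so there is no a priori reason for this integral to be positive. Your fallbacks (averaged transversality, controlling $\nabla J$ torsion terms) are not carried out, so Step 3 has no input. The Jacobi estimate is the right tool, but for an \emph{upper} bound $|\lambda_x(v)|\le \tfrac{R}{2}|v|$, not for convexity.

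\textbf{The missing idea is that you discarded the right object} when you noted $\mathcal{L}_X\omega\neq\omega$ for $X$ a multiple of $\nabla\rho$: that Lie derivative is itself the symplectic form you need. For $X=\nabla f$ one has $\iota_X\omega=-df\circ J$, so $d\omega=0$ and Cartan's formula give $\mathcal{L}_X\omega=-d(df\circ J)$. Expanding with the Levi-Civita connection and using $g(\nabla_X v,v)=g(\nabla_X Jv,Jv)$ (from $|v|=|Jv|$) yields the exact identity
\[ -d(df\circ J)(v,Jv)=\nabla^2 f(v,v)+\nabla^2 f(Jv,Jv). \]
In the almost K\"ahler case there are no $\nabla J$ terms to control. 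With $f=r^2$ and your Hessian comparison, $\Omega:=-d(dr^2\circ J)$ satisfies $\Omega(v,Jv)\ge 4|v|^2$.

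Instead of seeking a Liouville structure for $\omega$ itself, use $\Omega$:
\begin{enumerate}
\item $(M,\Omega)$ is Weinstein with Liouville form $\theta=-dr^2\circ J$. The potential $r^2$ has a single critical point, and the Liouville field has length $\le r/2$, so it is complete. Hence $(M,\Omega)\cong(\mathbb{R}^{2n},\omega_{std})$.
\item Deform back to $\omega$ by Moser along $\omega_t=t\Omega+(1-t)\omega$. Each $\omega_t$ is $J$-tame, hence symplectic.
\item The Moser field defined by $\iota_{v_t}\omega_t=\lambda-\theta$ obeys $|v_t|\le|\lambda|+|\theta|\le r/2+2r$. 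This uses your radial primitive $\lambda$ together with $|\theta|=2r$. Linear growth on a complete manifold makes $v_t$ complete, so $(M,\omega)\cong(M,\Omega)$.
\end{enumerate}
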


We recall that a {\em compatible almost complex structure} $J$ on a symplectic manifold is one such that $g(u,v) := \omega(u,Jv)$ defines a Riemannian metric.  A compatible triple $(\omega,J,g)$ is sometimes called an {\em almost K\"{a}hler} structure.   Previously, McDuff \cite{McDuff1988} had proved the above theorem in the K\"{a}hler case, i.e. when $J$ is integrable.  It will be helpful to observe in what follows that $J$ acts by isometries with respect to $g$, i.e. $g(u,v) = g(Ju,Jv),$ and that $\omega$ can be recovered from $g$ by the rule $\omega(u,v) = g(Ju,v).$ 

\subsection{On AI use}

This paper resulted from some of my attempts to experiment with artificial intelligence.   I have been interested in exploring the capabilities of current publicly available models and thought the ``symplectic Hadamard question" looked like an interesting potential use case, sitting as it does
at the intersection of symplectic, complex, and Riemannian geometry.
In this case, it took
 a back and forth over a few hours with ChatGPT 5.5 Pro to arrive at a proposed solution, 
which I then checked and refined over several days, also with the help of GPT.  The paper is entirely written by me.  You can see the transcript of the main research interactions at \url{https://chatgpt.com/share/6a55446f-f514-83ea-96cf-850912167bff}.   (I have not included the transcript of the aforementioned interactions around checking and refining the argument, which are quite long and less interesting.) 

The above transcript is rather long (and I was optimizing my prompts for speed of research discovery and my own convenience rather than for future sharing), so I feel I should say a few words about my impressions of the interaction.  As far as key contributions to the final product, in my view the crucial contribution of ChatGPT was the critical identity \eqref{eqn:levi}.   There are two proofs of this, a direct computation and a reduction to a previous reference in the literature, and GPT found both of these.  Given this, one can essentially just follow McDuff's argument, after a standard application of Hessian comparison.    As for my contributions, other than selecting the problem, I felt it took a fair amount of prodding before we got to \eqref{eqn:levi}, and I had to redirect GPT from an approach that I found not promising (the reader can see the start of this redirection at the prompt starting "Let's back up and try to work together a little bit more..."; I suggest also looking at the model output right above that prompt).  I also felt that I had to insist on the general structure of the argument.

\section{The proof}

\subsection{Plurisubharmonicity of the square distance function and the key estimate}

Let $f$ be any smooth function on $(M,\omega,J,g)$.  Recall the {\em Levi form}, which is central to the arguments of McDuff in the K\"{a}hler case.  We recall the definition: we regard $df \circ J$ as a one form, then consider the two form $d(df \circ J)$, and then the Levi form is given by 
\[ L_f(X,Y) = -d(df \circ J)(X, JY).\]
We recall that a function $f$ is called {\em strictly plurisubharmonic} if $L_f(X,X) > 0$ for $X \ne 0$.

Now fix a point $p \in M$.   Let $f = r^2$, where $r$ is the distance to $p$ as determined by the metric; this is smooth given our curvature assumption.  
We now claim that
the nonpositive curvature forces the squared distance function to be strictly plurisubharmonic, with quantitative control; see Lemma~\ref{lem:key} below.  This is the key fact that we will use to prove Theorem~\ref{thm:main}.  

To elaborate,
denote the two-form underlying the Levi form for $f = r^2$ by
\[ \Omega(X,Y) = - d(dr^2 \circ J)(X,Y).\]
In the K\"{a}hler case, McDuff estimates $\Omega(v,Jv)$ by appealing to rather general estimates of Greene-Wu and Siu-Yau \cite[Lem. 2.1]{McDuff1988}, \cite[Prop. 2.24]{GreeneWu1979} \cite{siuyau}.  This makes the generalization to the almost K\"{a}hler case seem potentially quite difficult.  However, it turns out that the needed estimate follows quite simply from an identity implicit (though not explicit) in work of 
Harvey-Lawson 
around their potential theory on almost complex manifolds, specifically their \cite[Thm. 9.1]{HarveyLawson2015}.  This was pointed out to the author by ChatGPT Pro 5.5 and came as a surprise.
Thus, one can completely bypass the Greene-Wu and Siu-Yau works for our purposes.  The claimed identity is the 
following:
\begin{equation}
\label{eqn:levi}
 \Omega(v,Jv) = \text{Hess}_{g}(r^2)(v,v) + \text{Hess}_g(r^2)(Jv,Jv).
 \end{equation}
We have stated this for $f = r^2$, because that is all we need to establish the theorem in the present work, but as will see from the proof the analogue of \eqref{eqn:levi} holds for any smooth function. We can think of \eqref{eqn:levi} as asserting that the quadratic form associated to the Levi form agrees with the quadratic form associated to the Hermitian symmetric part of the Riemannian Hessian; see also the discussion in \cite[Ch. 4]{HarveyLawson2015}. 
In the K\"{a}hler case, this is known, 
and a related identity was proved in the contact case \cite[Prop. 3.4, 3.7]{EKM2012Invent}, \cite[Prop.~3.2]{EKM2016TAMS}.  The identity allows us to generalize the McDuff estimate using the standard Hessian comparison theorem from e.g. Lee's textbook \cite[Thm.~11.7]{Lee2018}.
Namely, we have:

\begin{lemma}
\label{lem:key}
Under the hypotheses of Theorem~\ref{thm:main}, we have
\begin{equation}
\label{eqn:keyestimate}
 \Omega(v,Jv) \ge 4|v|^2.
 \end{equation}
\end{lemma}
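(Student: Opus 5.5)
We first establish the identity \eqref{eqn:levi} for an arbitrary smooth $f$, and then combine it with Hessian comparison. For the identity, we write $\theta = df\circ J = -d^c f$ and compute $d\theta(X,Y) = (\nabla_X\theta)(Y) - (\nabla_Y\theta)(X)$ using the Levi-Civita connection, which is torsion free. Since $\theta(Y) = df(JY)$, we have
\[
(\nabla_X\theta)(Y) = \mathrm{Hess}_g f(X,JY) + df((\nabla_X J)Y).
\]
Taking $Y = JX$ gives
\[
-\Omega(X,JX) = d\theta(X,JX) = -\mathrm{Hess}_g f(X,X) - \mathrm{Hess}_g f(JX,JX) + df\bigl((\nabla_X J)JX - (\nabla_{JX}J)X\bigr),
\]
where we used $J^2=-1$ and the symmetry of the Hessian. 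So everything reduces to the vanishing of
\[
(\nabla_X J)JX - (\nabla_{JX} J)X.
\]
This is the step where the almost K\"ahler hypothesis must enter, and it is the main obstacle. We plan to handle it with the standard almost K\"ahler facts: $d\omega=0$ together with compatibility gives $(\nabla_{JX}J)JY = -(\nabla_X J)Y$ (equivalently, $\nabla_X J$ anticommutes with $J$, and the Nijenhuis-type relation holds). Indeed, differentiating $J^2=-1$ gives $(\nabla_XJ)J = -J(\nabla_XJ)$. The almost K\"ahler identity $\nabla_{JX}J = J\nabla_X J$ then yields
\[
(\nabla_{JX}J)X = J(\nabla_XJ)X \quad\text{and}\quad (\nabla_XJ)JX = -J(\nabla_XJ)X.
\]
Hence the difference is $-2J(\nabla_XJ)X$. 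We then need the second standard fact, $(\nabla_XJ)X=0$ in the quasi-K\"ahler/almost K\"ahler case. If this does not hold directly, we will instead use that $\nabla_X J$ is $g$-skew and check that $g$ pairs the difference trivially against $\nabla f$ by averaging over $X$ and $JX$. We would verify these identities from the Koszul-type formula $2g((\nabla_XJ)Y,Z) = d\omega(X,JY,JZ) - d\omega(X,Y,Z) + g(N(Y,Z),JX)$, which specializes to the claim when $d\omega=0$. Alternatively, we can cite Harvey--Lawson \cite[Thm. 9.1]{HarveyLawson2015}.

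Given \eqref{eqn:levi}, the estimate follows from Hessian comparison. By Cartan--Hadamard, $\exp_p$ is a diffeomorphism, so $r^2$ is smooth. The Hessian comparison theorem \cite[Thm.~11.7]{Lee2018}, with comparison to the flat case $K\le 0$, gives $\mathrm{Hess}(r) \ge \frac1r (g - dr\otimes dr)$ away from $p$. Hence
\[
\mathrm{Hess}(r^2) = 2\,dr\otimes dr + 2r\,\mathrm{Hess}(r) \ge 2g .
\]
At $p$ itself the Hessian equals $2g$. Applying this bound to $v$ and to $Jv$, and using $|Jv|=|v|$, gives
\[
\Omega(v,Jv) \ge 2|v|^2 + 2|Jv|^2 = 4|v|^2,
\]
which is \eqref{eqn:keyestimate}.
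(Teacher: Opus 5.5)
Your Hessian comparison step is correct and is the paper's argument. Your reduction of \eqref{eqn:levi}, via $d\theta(X,Y) = (\nabla_X\theta)(Y) - (\nabla_Y\theta)(X)$, to the vanishing of $E(X) := (\nabla_XJ)JX - (\nabla_{JX}J)X$ is also correct. The gap is in how you dispose of $E(X)$. Your identity $\nabla_{JX}J = J\nabla_XJ$ has the wrong sign. Combining your own (correct) quasi-K\"ahler identity $(\nabla_{JX}J)JY = -(\nabla_XJ)Y$ with $(\nabla_{JX}J)J = -J(\nabla_{JX}J)$ gives $\nabla_{JX}J = -J\nabla_XJ$. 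Because of the sign error you conclude $E(X) = -2J(\nabla_XJ)X$, and you then need $(\nabla_XJ)X = 0$. That fails for every almost K\"ahler structure that is not K\"ahler, because it is the nearly K\"ahler condition. Indeed, $T(X,Y,Z) = g((\nabla_XJ)Y,Z)$ is already skew in $Y,Z$. If also $T(X,X,\cdot)=0$, then $T$ is totally skew, so $d\omega$, the cyclic sum of $T$, equals $3T$, and $d\omega = 0$ would force $\nabla J = 0$. Likewise, your Koszul-type formula with $d\omega = 0$ only gives $2g((\nabla_XJ)X,Z) = g(N(X,Z),JX)$, which is nonzero in general. As written, your argument therefore covers only the K\"ahler case, which is McDuff's. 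The fallback of ``averaging over $X$ and $JX$'' is not an argument.

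The repair is one line. Put $Y = -JX$ into $(\nabla_{JX}J)JY = -(\nabla_XJ)Y$ to get $(\nabla_{JX}J)X = (\nabla_XJ)JX$, so $E(X)=0$ identically. Equivalently, with the correct sign both terms of $E(X)$ equal $-J(\nabla_XJ)X$. The quasi-K\"ahler identity should be proved rather than quoted. It does follow from your Koszul formula when $d\omega=0$, using $N(JY,Z) = -JN(Y,Z)$: one gets $2g((\nabla_{JX}J)JY,Z) = -g(N(JY,Z),X) = -g(N(Y,Z),JX) = -2g((\nabla_XJ)Y,Z)$.

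With this fix your route is valid but differs from the paper's. The paper takes \eqref{eqn:levi} either from Harvey--Lawson, where the mean-curvature term drops because $J$-holomorphic curves are minimal, or from the appendix computation $\Omega = \mathcal{L}_{\nabla f}\omega$. That computation never involves $\nabla J$ or the Nijenhuis tensor; it uses $d\omega = 0$ only through Cartan's formula $d\iota_{\nabla f}\omega = \mathcal{L}_{\nabla f}\omega$ and otherwise needs only $g(v,v) = g(Jv,Jv)$. It is therefore shorter and harder to get wrong. Your computation instead isolates the almost K\"ahler input in the quasi-K\"ahler symmetry of $\nabla J$.
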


\begin{proof}
This is a standard argument that appears for example in the proof of \cite[Lem. 12.15]{Lee2018}.  Since all curvatures are bounded from above by $0$, the Hessian comparison theorem \cite[Thm.~11.7]{Lee2018} says that the Hessians are no smaller than they would be in the Euclidean case.  For the standard Euclidean metric $g_0$ we would have $\text{Hess}_{g_0}(r^2)(v,v) = 2 | v|^2$, so we obtain the key
\begin{equation}
\label{eqn:keyestimate}
 \Omega(v,Jv) \ge 2|v|^2 + 2 |Jv|^2 = 4|v|^2.
 \end{equation}
 \end{proof}

\subsubsection{More about Harvey-Lawson's identity}

We now explain the translation between $\Omega$ and the work of Harvey-Lawson, so as to see how to extract the identity \eqref{eqn:levi} from their work.  

To do this, we introduce some notation and some context.
Harvey-Lawson recall
that on any almost complex manifold, there is a {\em complex Hessian} \cite[Defn. 2.3]{HarveyLawson2015} associated to any function $f$, $\mathcal{H}(f)(v,w) = \partial \overline{\partial}f(v,\overline{w})$, taking as input $(1,0)$ tangent vectors.
This has an associated {\em real form}, a Hermitian symmetric form on the real tangent space, which on pairs $(v,v)$ takes the form
\[ H(f)(v,v) :=  \mathcal{H}(f)(v-i Jv, v - i Jv) \]
\[   =   \partial \overline{\partial} f(v - i Jv, v + iJv) = 2i \partial \overline{\partial}f (v,Jv).\]
Now, by definition, $2i \partial \overline{\partial} (r^2) (v,Jv) = \Omega(v,Jv)$.  To see this, note that for any $f$, $\overline\partial f(v) = \frac{1}{2}df(v + i Jv)$, so that $2i d \overline{\partial}f = - d(df \circ J)$; on the other hand, the anti-invariant component
\[ \frac{1}{2} \left( - d(df \circ J)(v,w) + d(df \circ J)(Jv,Jw)\right)\]
of
$d(df \circ J)$, i.e. the sum of the $(2,0)$ and $(0,2)$ parts, vanishes on pairs of the form $(v,Jv)$, so $d(df \circ J)$ is equal to its projection to $(1,1)$-forms when evaluated on pairs $(v,Jv)$. 

To sum up the above dictionary, then,
\begin{equation}
\label{eqn:language}
 \Omega(v,Jv) = H(r^2)(v,v),
 \end{equation}
in the notation of Harvey-Lawson:

We can now explain \eqref{eqn:levi}.  The proof given below summarizes what we need from Harvey-Lawson's proof of their \cite[Thm. 9.1]{HarveyLawson2015}.

\begin{proof}[Proof of \eqref{eqn:levi}, via \cite{HarveyLawson2015}]
Fix $v, Jv$, nonzero tangent vectors at any point $q$.  By standard theory \cite{NijenhuisWoolf1963}, \cite[Thm. 2.13.2]{wendl} there is a $J$-holomorphic curve $\Sigma$ tangent to the line spanned by $v,Jv$, which by restricting the domain we can assume is embedded.  We choose an extension of $v$ tangent to this holomorphic curve, noting that $H(f)(v,v)$ does not depend on the choice of extension.  Now we note that Harvey-Lawson show \cite[Lem. 9.4]{HarveyLawson2015} that in any Hermitian almost complex manifold (not necessarily almost K\"{a}hler), we have
\[ H(f)(v,v) = \text{Hess}_g(f)(v,v) + \text{Hess}_g(f)(Jv,Jv) + H_\Sigma \cdot f |v|^2,\]
where $H_\Sigma$ is the mean curvature vector field.  Since we are in addition assuming that our triple is almost K\"{a}hler, by standard theory $\Sigma$ is $g$-minimal, see e.g. the exposition in  \cite{mcduff}, and so $H_\Sigma$ vanishes, hence
\eqref{eqn:levi}, in view of \eqref{eqn:language}.  
\end{proof}

\begin{remark}
One can give a short alternate proof of the needed identity \eqref{eqn:levi} via direct tensorial calculation and we present this in the appendix.
\end{remark}

\subsection{The rest of the proof}

Equipped with \eqref{eqn:keyestimate}, the rest of the proof essentially follows McDuff's original argument from \cite{McDuff1988}; there is a subtlety coming from the fact that there is no canonical metric associated to the Levi form but this is not too severe.   We give a self-contained proof inspired by McDuff's that uses some more modern language at points. 

\subsubsection{The Weinstein manifold}

Let us first note that $\Omega$ is symplectic, for example by \eqref{eqn:keyestimate}.  Now in this subsection we prove:

\begin{prop}
\label{prop:weinstein}
The manifold $(M,\Omega)$ is symplectomorphic to $(\mathbb{R}^{2n},\omega_{std})$.
\end{prop}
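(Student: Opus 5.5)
Here $\Omega = d\lambda$ with $\lambda = -dr^2\circ J$, so $(M,\Omega)$ is exact. The plan is to show that $(M,\Omega,\lambda,\phi=r^2)$ is a complete Weinstein structure whose only critical point is $p$, a nondegenerate minimum. The Liouville flow then identifies $M$ with the completion of a standard ball, which is symplectomorphic to $(\mathbb{R}^{2n},\omega_{std})$.

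\textbf{Step 1: the Liouville field is gradient-like.} Let $V$ be the Liouville vector field, defined by $\iota_V\Omega=\lambda$. Put $\phi = r^2$ and $\lambda(X) = -d\phi(JX)$. Using \eqref{eqn:keyestimate}, the symmetric form $\tilde g(X,Y) = \tfrac12(\Omega(X,JY)+\Omega(Y,JX))$ is positive definite with $\tilde g \ge 4 g$. I would compute $d\phi(V)$ through $\tilde g$ rather than assume $\Omega$ is $J$-invariant, since the $(2,0)+(0,2)$ part of $\Omega$ need not vanish in the almost complex case. Concretely, $\lambda(JV)=d\phi(V)$, and also $\lambda(JV)=\Omega(V,JV)\ge 4|V|^2$. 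Hence $d\phi(V) \ge 4|V|^2$, with equality to zero only where $V=0$, i.e.\ where $d\phi=0$. Since $\phi=r^2$ is smooth on a Hadamard manifold with unique critical point $p$, $V$ is gradient-like for $\phi$ and vanishes only at $p$. At $p$ the Hessian of $r^2$ is $2g$, so $p$ is a nondegenerate minimum.

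\textbf{Step 2: completeness of the Liouville flow.} This is the step I expect to be the main obstacle; it is the "no canonical metric" subtlety the paper mentions. The backward flow stays in the compact sublevel set $\{\phi\le c\}$, since $\phi$ is proper by completeness and Hopf--Rinow. For the forward flow I need $\phi$ not to blow up in finite time. First try: bound $|V|$ linearly in $r$. Evaluating $\Omega(V,JV)=\lambda(JV)=d\phi(V)\le |d\phi|\,|V| = 2r|V|$ and combining with $\Omega(V,JV)\ge 4|V|^2$ gives $|V|\le r/2$. Then $\frac{d}{dt}\phi = d\phi(V) \le 2r\cdot r/2 = \phi$, so $\phi$ grows at most exponentially and the flow is complete.

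\textbf{Step 3: identification with $\mathbb{R}^{2n}$.} Fix a small $\epsilon$. The sublevel set $W=\{\phi\le\epsilon\}$ is a Liouville domain; by a Darboux/Morse lemma near the nondegenerate minimum and a deformation through Liouville structures, it is Liouville isomorphic to a standard ball with its radial Liouville form. By Steps 1--2, the Liouville flow identifies $(M,\lambda)$ with the completion $\widehat W = W\cup [0,\infty)\times\partial W$: every flow line other than $p$ crosses $\partial W$ exactly once transversally, is complete forward in time, and $\phi\to\infty$ along it because $\phi$ is proper and has no other critical points. The completion of the standard ball is $(\mathbb{R}^{2n},\omega_{std})$. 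Alternatively, any Liouville-homotopy invariance argument for complete Weinstein structures with a single critical point yields the same conclusion.
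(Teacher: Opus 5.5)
Your proposal is correct and is essentially the paper's proof. It uses the same Weinstein structure $(\Omega,\,-dr^2\circ J,\,r^2)$, the same identity $dr^2(V)=\Omega(V,JV)\ge 4|V|^2$ for gradient-likeness, and the same bound $|V|\le r/2$ for completeness; you close the completeness argument via $\tfrac{d}{dt}\phi\le\phi$ and properness of $\phi$ rather than Gr\"onwall on the length, which is equivalent. For the final identification the paper simply cites Cieliebak--Eliashberg for Weinstein manifolds with a single minimum, which is your ``alternatively'' route. Your hands-on completion sketch also works, provided the local step is read as a Liouville homotopy of a small sublevel set to a round ball (so the completions agree), not as a literal Liouville isomorphism.
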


\begin{proof}

Write $\Omega = d \theta$, for $\theta = - dr^2 \circ J$, and define the vector field $Z$ by reflecting $\theta$ via $\Omega$.  We claim that $(M,\Omega,r^2,Z)$ is a Weinstein manifold.

To show that $Z$ is gradient like for $r^2$, we note that $Z$ is the gradient of $r^2$ with respect to the two tensor $\Omega^s(v,w) = \Omega(v,Jw)$; i.e. $\Omega^s(Z,v) = dr^2(v):$  to see this, write $\Omega^s(Z,v) = \Omega(Z,Jv) = \theta(Jv) = dr^2(v).$  Moreover, the     
two tensor $\Omega^s(v,w) = \Omega(v,Jw)$ is positive definite, by \eqref{eqn:keyestimate}.  Hence $Z$ is
gradient like by \cite[Rem. 9.10]{CieliebakEliashberg2012}.   

To show that $Z$ generates a flow for all time, we estimate its length from above as follows.  By \eqref{eqn:keyestimate} we have $\Omega(Z,JZ) \ge 4 |Z|^2$, hence $|Z|^2 \le \frac{1}{4}|\theta(JZ)| \le \frac{1}{4} |\theta||Z|$ and so we get
\[ |Z| \le \frac{1}{4} |\theta| = \frac{1}{2} r.\]
Now we observe that a vector field of length $\le r$ on a complete manifold generates a flow for all time; this is a standard argument using Gr\"{o}nwall's inequality.

It is now easy to see that the Morse function $r^2$ has a unique critical point, which is the minimum, and so as is well-known \cite[Cor.~11.21, Cor.~11.27]{CieliebakEliashberg2012} the underlying symplectic manifold $(M,\Omega)$ is symplectomorphic to $\mathbb{R}^{2n}$ with its standard symplectic form.

\end{proof}

\subsection{Moser deformation}

Thus it remains to show the following. 

\begin{prop}
\label{prop:moser}
$(M,\omega)$ is symplectomorphic to $(M,\Omega)$.
\end{prop}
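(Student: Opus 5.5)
We will follow McDuff's Moser argument for the linear path $\omega_t = (1-t)\omega + t\Omega$, $t \in [0,1]$. The first task is to show that every $\omega_t$ is symplectic and uniformly comparable to $g$.

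Both $\omega$ and $\Omega$ tame $J$. For $\omega$ this is just $\omega(v,Jv) = |v|^2$. For $\Omega$ it is \eqref{eqn:keyestimate}, which gives $\Omega(v,Jv) \ge 4|v|^2$. Taming is a convex condition, so
\[ \omega_t(v,Jv) \ge \bigl(1-t+4t\bigr)|v|^2 \ge |v|^2 \]
for all $t$. Hence each $\omega_t$ is nondegenerate and closed. Since $M$ is simply connected and diffeomorphic to $\mathbb{R}^{2n}$ (Hadamard), $H^1(M)=H^2(M)=0$, so all the $\omega_t$ are exact. We take the explicit primitives $\omega = d\lambda$ and $\Omega = d\theta$, where $\lambda$ is a primitive of $\omega$ chosen carefully as described below. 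Then
\[ \frac{d}{dt}\omega_t = d(\theta - \lambda). \]
The Moser vector field $X_t$ is defined by $\iota_{X_t}\omega_t = \lambda - \theta$. If its flow exists up to time $1$, it gives a diffeomorphism $\phi$ with $\phi^*\Omega = \omega$.

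Since $M$ is noncompact, everything reduces to completeness of the time-dependent flow of $X_t$. We want a bound $|X_t| \le C(1+r)$ and then apply the same Gr\"onwall argument as in Proposition~\ref{prop:weinstein}. The taming bound gives
\[ |X_t|^2 \le \omega_t(X_t,JX_t) = (\lambda-\theta)(JX_t) \le |\lambda - \theta|\,|X_t|, \]
so $|X_t| \le |\lambda| + |\theta|$. We already know $|\theta| = |dr^2| = 2r$. It therefore suffices to find a primitive $\lambda$ of $\omega$ with $|\lambda| \le C r$ (or $Cr + C'$).

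\textbf{Main obstacle.} The key step is producing a linearly growing primitive of $\omega$. We would use the radial homotopy-operator primitive coming from the exponential map at $p$, which is a diffeomorphism by Cartan--Hadamard. Let $h_s(x) = \exp_p(s\exp_p^{-1}x)$ and let $R = r\partial_r$ be the radial vector field. Set
\[ \lambda = \int_0^1 h_s^*(\iota_{R}\omega)\,\frac{ds}{s}, \]
the Poincar\'e lemma primitive. At $x$ with $r(x)=\rho$, the integrand pairs $\omega$ at the point $h_s(x)$, whose norm is $1$, with $R(h_s(x))$, of length $s\rho$, and with $dh_s(v)$. By Rauch comparison (nonpositive curvature), geodesics spread at least as fast as in Euclidean space. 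The map $h_s$ contracts toward $p$, so $|dh_s| \le s$ on the directions orthogonal to $R$, and $dh_s$ scales the radial direction exactly by $s$. Thus the integrand is bounded by $s\rho \cdot s \cdot s^{-1} = s\rho$, and integrating gives $|\lambda| \le \rho/2$.

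With this bound $|X_t| \le \tfrac{5}{2}r$. Completeness of $g$ then makes the Moser flow exist on $[0,1]$ for every starting point, and the time-one map is surjective because the reverse-time flow also exists. We expect the Rauch estimate $|dh_s| \le s$ to be the step needing the most care. If it proves awkward, the fallback is to mimic McDuff: work on the exhaustion by sublevel sets of $r^2$ and check directly that the flow cannot escape in finite time.
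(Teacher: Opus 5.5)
Your proposal is correct and is essentially the paper's argument: the same linear path $\omega_t$ tamed by $J$, the same radial-contraction homotopy primitive $\lambda$ with $|\lambda|\le r/2$ from the Jacobi-field estimate $|dh_s(v)|\le s|v|$, and the same Gr\"onwall completeness argument for the linearly growing Moser field. The step you flag is exactly the one the paper relies on, so no fallback is needed: it follows from convexity of $|Q(\tau)|$ for Jacobi fields with $Q(0)=0$ in nonpositive curvature, or equivalently from the monotonicity of $|Q(\tau)|/\tau$ established in the proof of Rauch comparison.
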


\begin{proof}

We argue by
Moser's trick, namely we define the symplectomorphism by flowing for time $1$ along a vector field $v_t$.  To recall how this works, we consider the convex combination $\omega_t = t \Omega + (1-t) \omega$, noting that $\omega_t(v,Jv) \ge |v|^2$ by \eqref{eqn:keyestimate} and so $\omega_t$ is symplectic.  Thus we can find the vector field $v_t$ by reflecting $\lambda - \theta$ using $\omega_t$, where $\lambda$ is a primitive of $\omega$ to be specified in a moment, and the Moser argument, i.e. a computation using Cartan's magic formula, shows that the time $1$ flow will give the desired symplectomorphism as long as the flow is defined.

Thus we want to show that $v_t$ generates a flow, so as mentioned in the previous section, as $M$ is complete it suffices to show $|v_t| \le 4r$, by a standard argument using Gr\"{o}nwall's inequality.  To show this, it would suffice to show
\begin{equation}
\label{eqn:formbound}
 |\lambda - \theta| \le 4r,
 \end{equation}
 since $ |\lambda - \theta||v_t| \ge  \omega_t(v_t,Jv_t)  \ge |v_t|^2$ by \eqref{eqn:keyestimate}.  
 We choose the primitive $\lambda$, and then estimate it, as follows.  

Because the curvatures are nonpositive,
 radial contraction to $p$ defines a smooth homotopy $F_t$ on $M$ such that $F_0$ maps all of $M$ to $p$ and $F_1$ is the identity.  We now let $A$ be the standard homotopy operator on the de Rham complex associated to the homotopy $F_t$  and we set $\lambda = A \omega$: explicitly, this means that $\lambda$ is defined by viewing the homotopy as a map $M \times [0,1] \to M$, pulling back $\omega$, and then integrating along the fibres.  Thus $\lambda$ is a primitive of $\omega$ by the chain homotopy equation. 

 To estimate the size of $\lambda$, 
we compute that,
by the definition of  the homotopy operator $A$,  
 at a point $x \in M$ with distance $R$ from $p$, if $\gamma$ denotes the unit speed geodesic connecting $p$ to $x$ then  
\[ \lambda_x(v) = \int^1_0 \omega_{\gamma(t R)}( R \dot{\gamma}(tR), Q(t)) dt.\]
where $Q(t)$ is some Jacobi field along $\gamma$
such that 
\begin{equation}
\label{eqn:jacobi}
Q(1) = v, \quad \quad Q(0) = 0.
\end{equation}   
Because $g$ is compatible with $\omega$ we have that 
\[ | \omega_{\gamma(t R)}( R \dot{\gamma}(tR), Q(t))| \le | R \dot{\gamma}(tR) | | Q(t) | =   R  |Q(t)|.\]
As is standard, the size of Jacobi fields \cite[Ch.~IV, Lem.~2.3]{Ballmann1995} in manifolds of nonpositive curvature is convex along geodesics.  Hence 
by \eqref{eqn:jacobi},
$|Q(t)| \le t|v|$.  Putting it all together, we therefore obtain that 
\[ |\lambda_x(v)| \le \frac{1}{2} R |v|,\]
hence the desired estimate for $\lambda$.   Noting that $|\theta| = 2r$, we derive \eqref{eqn:formbound}.  
 
\end{proof}

\vspace{5 mm}

\begin{proof}[Proof of Theorem~\ref{thm:main}]
This is an immediate consequence of Proposition~\ref{prop:weinstein} and Proposition~\ref{prop:moser}.
\end{proof}

{\bf Acknowledgements: } I thank Mohammed Abouzaid, Shaoyun Bai, Tamas Darvas, Tony Feng, Jeremy Kahn, 
Rohil Prasad, and Rich Schwartz for helpful questions and comments.  I thank the NSF for their support under agreements DMS-
2227372 and DMS-2238091.

\section{Appendix: a tensorial derivation of \eqref{eqn:levi}}
\label{sec:AI}

The following derivation of \eqref{eqn:levi} was worked out with ChatGPT.  (ChatGPT verified this identity autonomously and together we simplified the argument.)  Though it seems to have some aspects in common with ideas from \cite{EKM2012Invent, EKM2016TAMS,HarveyLawson2015}, I have not been able to find this derivation in its exact form in the literature.  This derivation is in fact essentially the initial proof (rewritten and shortened) of \eqref{eqn:levi} provided to me by GPT in the midst of our discussions, though later we discovered the Harvey-Lawson reference after some further back and forth.

\begin{proof}[An alternate proof of \eqref{eqn:levi}.]

We start by observing that the one form $-df \circ J$ agrees with $\iota_X \omega$, where $X = \nabla f$.  Indeed, $\iota_X \omega(v) = \omega(X,v) = g(JX,v) = - g(X,Jv) = - df(Jv).$   Thus we need to evaluate $d \iota_X \omega$ and by the Cartan formula this is just $\mathcal{L}_X \omega$, since $\omega$ is closed.  We have therefore shown that in the case $f = r^2,$ we have that $\mathcal{L}_X \omega = \Omega$ and so we just have to understand this Lie derivative.

To evaluate $\mathcal{L}_X \omega,$ we first note the 
product rule
\[ (\mathcal{L}_X\omega)(A,B) = X (\omega(A,B)) - \omega([X,A],B) - \omega (A, [X,B])\]
when $A$ and $B$ are vector fields. Now extend $v$ to a smooth vector field which we will continue to denote by $v$, and substitute into the above equation to get
\[ (\mathcal{L}_X\omega)(v,Jv) = X( g(Jv,Jv) )  - g(J [X,v], Jv) - g( Jv, [X, Jv]).\]
\[= 2 g ( \nabla_X v, v) - g( [X,v], v) - g([X,Jv], Jv) .\]
\begin{equation}
\label{eqn:bracketed}
 = [g( \nabla_X v, v) - g(\nabla_X Jv, Jv)]  + g(\nabla_v X, v) + g( \nabla_{Jv} X, Jv).
 \end{equation}
We have 
\[ g( \nabla_v X, v) = \text{Hess}_g(f)(v,v), \quad g( \nabla_{Jv} X, Jv) = \text{Hess}_g(f)(Jv,Jv).\]
On the other hand, differentiating $g(v,v) = g(Jv,Jv)$ with respect to $X$ gives that $g(\nabla_X v, v) = g( \nabla_X (Jv), Jv)$, so the bracketed term in \eqref{eqn:bracketed} vanishes as well, hence the result.

\end{proof}


\begin{thebibliography}{99}

\bibitem{Ballmann1995}
W.~Ballmann,
\emph{Lectures on Spaces of Nonpositive Curvature},
OWS Seminar, vol.~25,
Birkh\"auser Basel, 1995.


\bibitem{CieliebakEliashberg2012}
K.~Cieliebak and Y.~Eliashberg,
\emph{From Stein to Weinstein and Back: Symplectic Geometry of Affine Complex Manifolds},
American Mathematical Society Colloquium Publications, vol.~59,
American Mathematical Society, Providence, RI, 2012.

\bibitem{EKM2012Invent}
J.~B. Etnyre, R.~Komendarczyk, and P.~Massot,
Tightness in contact metric 3-manifolds,
\emph{Invent. Math.} \textbf{188} (2012), no.~3, 621--657.
doi:10.1007/s00222-011-0355-2.

\bibitem{EKM2016TAMS}
J.~B. Etnyre, R.~Komendarczyk, and P.~Massot,
Quantitative Darboux theorems in contact geometry,
\emph{Trans. Amer. Math. Soc.} \textbf{368} (2016), no.~11, 7845--7881.
doi:10.1090/tran/6821.

\bibitem{GreeneWu1979}
R.~E. Greene and H.~Wu,
\emph{Function Theory on Manifolds Which Possess a Pole},
Lecture Notes in Mathematics, vol.~699,
Springer, Berlin, 1979.


\bibitem{HarveyLawson2015}
F.~R. Harvey and H.~B. Lawson, Jr.,
Potential theory on almost complex manifolds,
\emph{Ann. Inst. Fourier} \textbf{65} (2015), no.~1, 171--210.

\bibitem{Lee2018}
J.~M. Lee,
\emph{Introduction to Riemannian manifolds},
2nd edition.
Graduate Texts in Mathematics, vol.~176,
Springer Cham, 2018.

\bibitem{mcduff}
D.~McDuff, {\em Symplectic structures --- a new approach to geometry}, Notices of the AMS, 45.8, 1998.

\bibitem{McDuff1988}
D.~McDuff,
The symplectic structure of K\"ahler manifolds of nonpositive curvature,
\emph{J. Differential Geom.} \textbf{28} (1988), no.~3, 467--475.

\bibitem{McDuffSalamon2017}
D.~McDuff and D.~Salamon,
\emph{Introduction to Symplectic Topology},
3rd ed., Oxford, 2017.

\bibitem{wendl}
C. Wendl, {\em Lectures on Holomorphic Curves in
Symplectic and Contact Geometry}, version 3.3, available on the author's website.

\bibitem{NijenhuisWoolf1963}
A.~Nijenhuis and W.~B. Woolf,
Some integration problems in almost-complex and complex manifolds,
\emph{Ann. of Math. (2)} \textbf{77} (1963), 424--489.
doi:10.2307/1970126.

\bibitem{siuyau} Y.T. Siu and S.T. Yau, {\em Complete K\"{a}hler manifolds with nonpositive
curvature of faster than quadratic decay}, Ann. Math. 105
(1977), 225-264.  doi:10.2307/1970998.

\end{thebibliography}
\end{document}